\newtheorem{thm}{Theorem}[section]
\newtheorem{lem}[thm]{Lemma}
\newcommand{\td}{Tomasz Downarowicz}
\newcommand{\D}{Downarowicz}
\newcommand{\pw}{Faculty of Mathematics and Faculty of Fundamental Problems of Technology, Wroc\l aw University of Technology, Wybrze\.ze Wyspia\'nskiego~27, 50-370 Wroc\l aw, Poland}
\newcommand{\imp}{Institute of Mathematics, Polish Academy of Sciences, ul. \'Sniadeckich~8, 00-656 Warszawa, Poland}
\newcommand{\guang}{School of Mathematical Sciences, South China Normal University, Guangzhou, Guangdong 510631, China}
\newcommand{\upad}{Institut f\"ur Mathematik, Universit\"at Paderborn, Warburger\break Strasse~100, 33098 Paderborn, Germany}
\newcommand{\pwr}{@pwr.edu.pl}
\newcommand{\ustc}{@mail.ustc.edu.cn}
\newcommand{\padeborn}{@math.uni-paderborn.de}
\newcommand{\mpt}{measure-preserving system}
\newcommand{\im}{invariant measure}
\newcommand{\z}{\mathbb Z}
\newcommand{\C}{\mathfrak C}
\newcommand{\na}{\mathbb N}
\newcommand{\p}{\mathcal P}
\newcommand{\U}{\mathcal U}
\newcommand{\R}{\mathcal R}
\newcommand{\tl}{topological}
\newcommand{\ds}{dynamical system}
\newcommand{\usc}{upper semicontinuous}
\newcommand{\xt}{$(X,T)$}
\newcommand{\xmt}{$(X,\mu,T)$}
\newcommand{\mtx}{\mathcal M_T(X)}
\newcommand{\xtd}{$(X'',T'')$}
\newcommand{\xmtd}{$(X'',\mu'',T'')$}
\newcommand{\mtxd}{\mathcal M_{T''}(X'')}
\newcommand{\msy}{\mathcal M_S(Y)}
\newcommand{\mtxe}{\mathcal M^e_T(X)}
\newcommand{\ex}{\mathsf{ex}}
\newcommand{\diam}{\mathsf{diam}}
\newcommand{\dist}{\mathsf{dist}}
\newcommand{\id}{\mathsf{id}}
\newcommand{\Ret}{\mathsf{Ret}}
\newcommand{\sq}{sequence}
\newcommand{\zd}{zero-dimensional}
\numberwithin{equation}{section}
\begin{document}

\title{Zero-dimensional isomorphic dynamical models}

\author[T. \D]{\td}\address{\td: \pw}\email{Tomasz.\D\pwr}
\author[L. Jin]{Lei Jin}\address{Lei Jin: \imp}\email{jinleim\ustc}
\author[W. Lusky]{Wolfgang Lusky}\address{Wolfgang Lusky: \upad}\email{lusky\padeborn}
\author[Y. Qiao]{Yixiao Qiao}\address{Yixiao Qiao: \guang}\email{yxqiao\ustc}

\thanks{The authors are partially supported by the National Science Center (Poland) grant 2013/08/A/ST1/00275.}

\subjclass[2010]{37B10; 37B40.}
\keywords{Assignment, \zd\ system, isomorphic model, measure-theoretic isomorphism}

\begin{abstract}
By an \emph{assignment} we mean a mapping from a Choquet simplex $K$ to probability \mpt s, obeying some natural restrictions. We prove that if $\Phi$ is an aperiodic assignment on a Choquet simplex $K$ such that the set of extreme points $\ex K$ is a countable union $\bigcup_n E_n$, where each set $E_n$ is compact, zero-dimensional, and the restriction of $\Phi$ to the Bauer simplex $K_n$ spanned by $E_n$ can be ``embedded'' in some \tl\ \ds, then $\Phi$ can be ``realized'' in a \zd\ system. 
\end{abstract}

\maketitle

\section{Introduction}
For a convex set $K$ we shall denote by $\ex K$ the set of its extreme points. A \emph{metrizable Choquet simplex} (which in this note will be briefly called just a \emph{simplex}) is a compact convex subset $K$ of a metric-linear space endowed with a convex metric, such that every point $p\in K$ is the barycenter of a unique probability distribution supported by $\ex K$ (we are using the Choquet-Meyer characterization of simplices, see e.g., \cite{Phelps}). A simplex $K$ is called \emph{Bauer} if $\ex K$ is closed (and hence compact). Any simplex is a compact connected space, but its set of extreme points may have various \tl\ attributes: it may but need not be compact, sigma-compact or \zd, etc. We will shortly say that the simplex has any of the above \tl\ properties meaning that its set of extreme points has it. For instance, we will speak about \zd, sigma-compact simplices. Exception: since Bauer simplices already have their name, we will not use the confusing term ``compact simplex''.

A \emph{face} of a simplex $K$ is a compact convex subset $F\subset K$ such that $\ex F\subset\ex K$. A face of a simplex is a simplex. 

By a \emph{\tl\ \ds} we mean a pair \xt, where $X$ is a compact metric space and $T:X\to X$ is a homeomorphism. It is well known that the set $\mtx$ of all $T$-invariant Borel probability measures (in the sequel we will skip the adjectives ``Borel'' and ``probability'') on $X$, equipped with the weak-star topology, is a simplex, and we shall call it \emph{the simplex of \im s}. The standard metric on measures
$$
\dist(\mu,\nu)=\sum_{n\ge 1}2^{-n}\left|\int f_n\,d\mu - \int f_n\,d\nu\right|,
$$
where $(f_n)_{n\ge 1}$ is some fixed \sq\ of normed continuous functions linearly dense in $C(X)$ (the space of continuous real functions on $X$ with the uniform norm), is well known to be convex and it induces the weak-star topology. We denote by $\mtxe$ the collection of all ergodic $T$-\im s on $X$, which coincides with the collection of extreme points of $\mtx$. By a \emph{\zd\ system} we understand a \tl\ \ds\ \xt, where the space $X$ is \zd.

By an \emph{assignment} we will understand a mapping $\Phi$ defined on a simplex $K$, whose ``values'' are \mpt s (by a \mpt\ we mean a standard probability space $(X,\Sigma,\mu)$ together with a measure-automorphism $T:X\to X$). We also require that the assignment obeys the following rules:
\begin{enumerate}
\item[(1)] extreme points are assigned ergodic \mpt s,
\item[(2)] if $p\in K$ is the barycenter of a probability distribution $\xi$ on $\ex K$ then
$$
\Phi(p)\approx\int\Phi(e)\,d\xi(e),
$$
where ``$\approx$'' denotes measure-theoretic isomorphism and the integral is realized on a disjoint union of the spaces realizing the assignments $\Phi(e)$ for $e\in\ex K$ (we will say that $\Phi$ is \emph{harmonic}).
\end{enumerate}
Two assignments, $\Phi$ and $\Phi'$ defined on $K$ and $K'$, respectively, are said to be \emph{equivalent} if there exists an affine homeomorphism $\pi:K\to K'$ such that $\Phi(p)\approx\Phi'(\pi(p))$, for every $p\in K$. Since $\Phi$ and $\Phi'$ are both harmonic, it suffices to check $\Phi(e)\approx\Phi'(\pi(e))$ for $e\in\ex K$. By a \emph{face of an assignment} $\Phi$ on a simplex $K$ we mean the restriction of $\Phi$ to a face of $K$. 

An assignment $\Phi$ on $K$ is called \emph{aperiodic} if $\Phi(e)$ is aperiodic for each $e \in \ex K$. 

For a \tl\ \ds\ \xt, the assignment $\Phi$ on $\mtx$ defined by $\Phi(\mu)=(X,\mathsf{Borel}(X),\mu,T)$ is called the \emph{natural} assignment of \xt. We say that an assignment can be \emph{realized} (\emph{embedded}) in a \tl\ \ds\ \xt\ if it is equivalent to (a face of) the natural assignment of \xt. 

\medskip
The general question about a characterization of assignments realizable in \tl\ \ds s is wide open and seems to be hopelessly difficult. It is not only the question about a possible affine-topological shape of the set $\mtx$ in a \tl\ \ds\ \xt\ but also about the possible configuration of ergodic systems placed over the extreme points of this set. For example, one can ask whether it is possible to have, in one \tl\ \ds\ (perhaps minimal), a \sq\ of measures isomorphic to, say, irrational rotations, converging to, say, a Bernoulli measure. Or is it possible to have a closed arc of Bernoulli measures parametrized continuously and increasingly by their entropies, and no other ergodic measures. The variety of imaginable questions of this kind is endless. So far, there exist only partial results, and most of them concern \zd\ systems. Let us review briefly some of them.
\begin{enumerate}
	\item On any simplex $K$ there exists an assignment realizable in a minimal \zd\ system 
	(more precisely, in a Toeplitz subshift, \cite{D91}).
	\item On any simplex $K$, given any nonnegative affine function $h$ on $K$, of the class LU (increasing 
	limit of a \sq\ of upper semicontinuous functions), there exists an assignment realizable in a 
	\zd\ minimal system, such that the resulting entropy function on \im s coincides with $h$. If $h$ is 
	upper semicontinuous, the minimal system can be a subshift (\cite{DS}).
	\item The natural assignment of any aperiodic \zd\ system can be realized in a minimal \zd\ system 	 
	(\cite{D06}).
	\item Any aperiodic assignment that can be embedded in a \zd\ system can also be realized in a \zd\ system (\cite{DFace}).
	\item If $K$ is a simplex such that $\ex K$ is countable, then any aperiodic assignment on $K$ can be realized in a Cantor minimal system (\cite{KO}).
\end{enumerate}
The result (5) allows to answer positively all questions of the kind ``can there be a \sq\ of such and such measures converging to such and such measure''. The result (4) has many applications, in particular, it allows to settle the above mentioned question about an arc of Bernoulli measures, it also allows to construct universal \tl\ systems (even minimal) which contain (up to isomorphism) every possible aperiodic ergodic system (both invertible and noninvertible). The result (4) will be heavily used also in this note.
\medskip

Since all above results concern \zd\ systems, the following question seems to be of crucial importance toward understanding the assignments realizable in all \tl\ \ds s:
\begin{itemize}
	\item[(*)] Is every natural assignment arising from an aperiodic \tl\ \ds\ equivalent to a natural assignment arising from a \zd\ \ds?
\end{itemize}
Aperiodicity restriction is added in order to avoid some trivial counterexamples, such as the identity map on a connected space. There is a large class of systems which have so-called \emph{small boundary property} (SBP), for instance all invertible finite entropy systems which possess an aperiodic minimal factor (see \cite{Lindenstrauss}). It is fairly obvious that systems with SBP have so-called \emph{isomorphic} \zd\ extensions (see e.g. \cite{BD}), and since isomorphic extensions preserve natural assignments (up to equivalence), for such systems the answer to the question (*) is positive. It is worth mentioning that it remains an open problem whether possessing an isomorphic \zd\ extension is equivalent to SBP. But the property asked for in (*) is much weaker than possessing an isomorphic \zd\ extension, and the answer to that question beyond systems with SBP is unknown and seems hopelessly difficult. Any progress in this direction is valuable. In this paper we will provide the following partial answer:

\begin{thm}\label{bauer}
Let \xt\ be an aperiodic \tl\ \ds\ such that $\mtx$ is a \zd\ Bauer simplex, or a \zd\ sigma-compact simplex.\footnote{Recall that we consider properties of $\mtxe$. We remark that a set is \zd\ and sigma-compact if and only if it is a countable union of  \zd\ compact sets.} Then the natural assignment arising from the system \xt\ can be realized in a \zd\ system.
\end{thm}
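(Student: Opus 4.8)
The plan is to derive the theorem from the general criterion that an aperiodic assignment $\Phi$ on a simplex $K$ with $\ex K=\bigcup_n E_n$, each $E_n$ compact and \zd, is realizable in a \zd\ system whenever every restriction $\Phi|_{K_n}$ to the Bauer simplex $K_n$ spanned by $E_n$ embeds in some \tl\ \ds. I apply it with $K=\mtx$ and $\Phi$ the natural assignment of \xt, so three hypotheses must be checked. Aperiodicity is immediate: since \xt\ is aperiodic, each $e\in\ex K=\mtxe$ is an aperiodic ergodic measure, so every $\Phi(e)$ is aperiodic and $\Phi$ is an aperiodic assignment. The decomposition is also at hand: in the Bauer case $\ex K$ is already compact and \zd, so I take $E_1=\ex K$; in the \zd\ sigma-compact case the footnote supplies $\ex K=\bigcup_n E_n$ with each $E_n$ compact and \zd.

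The embeddability hypothesis is the only point genuinely tied to the natural assignment, and here it holds for free. Put $K_n=\overline{\mathrm{conv}}\,E_n\subseteq\mtx$; this is a compact convex subset of $K$. By Milman's theorem $\ex K_n\subseteq E_n$, and conversely every $e\in E_n$, being extreme in $K$, stays extreme in the smaller set $K_n$, so $\ex K_n=E_n\subseteq\ex K$. Hence $K_n$ is a face of $K$ in the sense used in this paper, and a Bauer one since $E_n$ is compact. Therefore $\Phi|_{K_n}$ is literally a face of the natural assignment of \xt, that is, it is embedded in the \tl\ \ds\ \xt\ itself. The criterion now applies and produces a \zd\ realization of $\Phi$, as claimed.

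All the substance thus sits in the criterion, and this is where I expect the real difficulty; I would prove it in three moves. First, for each $n$ I would build from the \tl\ embedding a genuinely \zd\ model of the Bauer piece: using that the base $E_n$ is compact and \zd, code it by a refining sequence of clopen partitions and carry each ergodic fiber $\Phi(e)$ on a measure-isomorphic copy over an array- or Toeplitz-type \zd\ system, obtaining $(W_n,R_n)$ \zd\ with natural assignment equivalent to $\Phi|_{K_n}$. Second, I would glue the countably many $(W_n,R_n)$ into one \zd\ system $(W,R)$ in which the combined extreme boundary carries the correct weak-star topology of $\ex K$ and the harmonic identity defining $\Phi$ is preserved, so that $\Phi$ becomes embedded in $(W,R)$. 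Third, since $\Phi$ is aperiodic and now embedded in the \zd\ system $(W,R)$, result (4) (\cite{DFace}) upgrades this to a realization in a \zd\ system. The genuine obstacles are the first two moves: reproducing the whole affine family $e\mapsto\Phi(e)$ topologically over a \zd\ base without disturbing the isomorphism types of the fibers, and—on the sigma-compact side, where $\ex K$ need not be closed—gluing the pieces so that convergence of measures within and across the $E_n$ reproduces the weak-star topology of $K$ while harmonicity keeps assigning each non-extreme point its correct isomorphism class. By comparison the reduction of the first two paragraphs is routine.
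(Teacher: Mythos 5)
Your reduction is exactly the paper's proof: Theorem~\ref{bauer} is deduced from Theorem~\ref{sigma} by taking $K=\mtx$, $\Phi$ the natural assignment, and observing that each restriction $\Phi|_{K_n}$ is a face of the natural assignment of \xt\ and hence already embedded in a \tl\ \ds\ (the paper states this as ``an obvious particular case''), and your verification that $K_n$ is a Bauer face with $\ex K_n=E_n$ is correct. Your closing sketch of how to prove the general criterion is only an outline (the paper's actual proof of Theorem~\ref{sigma} occupies Sections~3--4), but since that criterion is a separately stated and separately proved theorem of the paper, this does not affect the validity of your proof of the statement at hand.
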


In fact, our main achievement is the following, slightly more general, theorem (of which Theorem \ref{bauer} is an obvious particular case):

\begin{thm}\label{sigma}
Suppose that $\Phi$ is an aperiodic assignment on a simplex $K$ such that $\ex K$ is a countable union $\bigcup_{n\ge 1}E_n$, where every set $E_n$ is \zd\ and compact. Assume that for each $n$ the restriction of $\Phi$ to the Bauer simplex $K_n$ spanned by $E_n$ can be embedded in a \tl\ \ds. Then $\Phi$ can be realized in a \zd\ system.
\end{thm}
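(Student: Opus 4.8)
The plan is to reduce the whole statement, by means of item (4) (\cite{DFace}), to a pure \emph{embedding} problem, and then to carry out the embedding in two stages: first one zero-dimensionalizes each Bauer face separately, and afterwards one glues the countably many zero-dimensional pieces into a single system. Since $\Phi$ is aperiodic, once we know that $\Phi$ can be embedded in some zero-dimensional system, \cite{DFace} immediately upgrades this to a realization in a zero-dimensional system. Thus it suffices to prove the following: $\Phi$ can be embedded in a zero-dimensional system.

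First I would treat a single $n$. Here the data is an aperiodic assignment on the Bauer simplex $K_n$, whose extreme boundary $E_n$ is compact and zero-dimensional, together with an embedding of $\Phi|_{K_n}$ in a topological system $(X_n,T_n)$; concretely, a homeomorphism $e\mapsto\nu_e$ of $E_n$ onto a compact set of ergodic measures spanning a face $F_n$ affinely homeomorphic to $K_n$, with $\Phi(e)\approx(X_n,\nu_e,T_n)$. The goal is to produce a zero-dimensional system carrying isomorphic copies of the same family in the same (Bauer) configuration. Note that one cannot simply pass to a zero-dimensional extension of $(X_n,T_n)$: a topological extension replaces each $\nu_e$ by a strictly larger measure-theoretic system and therefore destroys the isomorphism types $\Phi(e)$ (this is exactly why the statement must stay clear of isomorphic extensions and of SBP). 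Instead I would build a brand-new zero-dimensional system by a Jewett--Krieger--type modeling performed \emph{uniformly over the parameter space} $E_n$: the compactness and zero-dimensionality of $E_n$, together with the genuine continuity of $e\mapsto\nu_e$ supplied by the topological embedding, should allow one to code the systems $\Phi(e)$ into a single symbolic (hence zero-dimensional) system $(Z_n,S_n)$, so that the ergodic measures over $E_n$ are isomorphic to $\Phi(e)$ and again form a compact Bauer face affinely homeomorphic to $K_n$. In the degenerate case where $E_n$ is a single point this is precisely the classical Jewett--Krieger theorem, and the parametrized version is the technical core of this step.

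Next I would glue. Writing $\ex K=\bigcup_n E_n$, each face $K_n$ now sits, up to equivalence, inside the invariant-measure simplex of a zero-dimensional system $(Z_n,S_n)$. I would assemble the $(Z_n,S_n)$ into one zero-dimensional system $(Z,S)$ --- by a disjoint-union/compactification or inverse-limit construction --- positioning the pieces so that the union of the corresponding ergodic-measure sets sits in $\mathcal M_S(Z)$ homeomorphically to $\ex K$ and spans a closed face $F$ affinely homeomorphic to $K$. Since only an embedding is required, spurious ergodic measures of $(Z,S)$ lying outside $F$ are harmless. What must be reproduced faithfully is the \emph{global} simplex $K$: in particular the way the compact pieces $E_n$ accumulate on one another, and the non-extreme limit points of $\overline{\ex K}$ in $K$ (which, by harmonicity, appear automatically as limits of barycenters rather than as new ergodic measures). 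Here I would use that $\ex K$ is zero-dimensional and $\sigma$-compact to match its topology by the placement of the pieces, and the freedom of embedding to absorb everything else.

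The main obstacle is this gluing step combined with the preceding zero-dimensionalization: the hypotheses control only the Bauer pieces $K_n$, each through a \emph{separate} topological embedding, whereas the conclusion concerns the single, generally non-Bauer simplex $K$. The difficulty is therefore twofold: (i) to zero-dimensionalize each Bauer face while preserving both the isomorphism types $\Phi(e)$ and the compact configuration of measures, and (ii) to fuse the countably many zero-dimensional Bauer pieces into one system whose set of ergodic measures realizes the full closure and barycentric structure of $K$. Once $\Phi$ is embedded in the zero-dimensional system $(Z,S)$ produced this way, \cite{DFace} yields the desired realization in a zero-dimensional system, completing the proof.
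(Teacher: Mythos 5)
Your high-level reduction is the right one (embed $\Phi$ in a zero-dimensional system, then invoke \cite{DFace} to upgrade the embedding to a realization, using aperiodicity), and your observation that spurious ergodic measures are harmless at the embedding stage matches the paper. But both technical cores of your plan are left as gaps, and one of them rests on a misconception. For the single Bauer face you assert that ``one cannot simply pass to a zero-dimensional extension of $(X_n,T_n)$'' because a topological extension would enlarge the measure-theoretic systems. That is not true: an extension whose factor map is $1$-$1$ on a set of full measure for a given invariant measure preserves the isomorphism type of that measure, and this is exactly the route the paper takes. It forms $X\times\C$ with $T\times\id$, builds partitions $\p^{(k)}_t$ whose boundaries are spheres of radius $r_t^{(k)}$ (pairwise disjoint as $t$ varies), proves an upper semicontinuity lemma for $(\mu,t)\mapsto\mu(\partial\p^{(k)}_t)$, and extracts a \emph{continuous selector} $s:\ex K\to\C$ with $\mu(\bigcup_k\partial\p^{(k)}_{s(\mu)})=0$ for every $\mu\in\ex K$ (here compactness and zero-dimensionality of $\ex K$ are used). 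The symbolic coding is then $1$-$1$ a.e.\ for each $\mu\times\delta_{s(\mu)}$, so the Bauer face lifts isomorphically into a zero-dimensional system. Your replacement --- a ``Jewett--Krieger-type modeling performed uniformly over the parameter space'' --- is not an available off-the-shelf result; a continuously parametrized Jewett--Krieger theorem producing a compact Bauer family of uniquely-coded models is essentially as hard as the statement you are trying to prove, and you give no construction.

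The gluing step is likewise named but not performed. A disjoint union of the systems $(Z_n,S_n)$ is not compact, and neither a one-point compactification nor an unstructured inverse limit gives you any control over \emph{how} the ergodic-measure sets over the pieces $E_n$ accumulate on one another, which is precisely what must reproduce the topology of $\ex K$ and the face structure of $K$. The paper's mechanism is concrete: realize $K$ as a face of the universal (Poulsen) simplex; prove that the invariant-measure simplex of any aperiodic zero-dimensional system appears as a face inside every open subset of the universal simplex (Lemma \ref{dense}, via a marker construction that overwrites frequencies); prove an affine continuous retraction lemma onto $\epsilon$-dense faces via the Lazar--Michael selection theorem (Lemma \ref{Lusky}); and then run a successive-approximation induction, chopping the compact zero-dimensional sets $E_n$ into small clopen pieces, planting copies of the realizable restricted assignments near their images, and correcting with convex combinations with the identity so that the affine maps $\phi_k$ converge uniformly to an affine homeomorphism onto a face $K'$ carrying an equivalent assignment. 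Without some substitute for this approximation scheme, your phrase ``positioning the pieces so that the union \dots sits in $\mathcal M_S(Z)$ homeomorphically to $\ex K$'' is exactly the unproved claim. So the proposal correctly identifies where the difficulty lies but does not close it at either stage.
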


The difference between the above two theorems is that in the latter we demand \emph{separate} embeddings for the restrictions $\Phi|_{K_n}$. This does not directly imply the existence of a joined realization for $\Phi$, which is assumed in the former theorem (obviously, the converse implication holds).

\medskip
\section{Preliminaries}
In this section, we summarise necessary notions, in particular, we discuss array systems and markers in aperiodic systems. For details we refer to \cite{Boy,DK}. We let \xt\ and $(Y,S)$ be \tl\ \ds s.

For $\mu\in\mtx$, a point $x\in X$ is said to be \emph{generic} for $\mu$ if the \sq\ of measures $(1/n)\sum_{i=0}^{n-1}\delta_{T^i(x)}$ tends to the measure $\mu$ as $n\to+\infty$, in the weak-star topology, where $\delta_x$ denotes the Dirac measure at the point $x$. It is well known that for any $\mu\in\mtxe$, the set of generic points in $X$ has full $\mu$-measure. We say that $\msy$ is a \emph{copy} of $\mtx$ if $(Y,S)$ and \xt\ are \tl ly conjugate. It is clear that then the natural assignments on $\msy$ and $\mtx$ are equivalent via a mapping $\pi$ induced by the \tl\ conjugacy.

Let $\Lambda_1,\Lambda_2,\dots$ be finite sets each containing at least two elements (called \emph{alphabets}, endowed with the discrete topology, the cardinalities need not be bounded). By an \emph{array system} (over $\Lambda_1,\Lambda_2,\dots$) we mean any closed, shift-invariant subset of the Cartesian product $\prod_{k\ge 1} \Lambda_k^\z$ (endowed with the product topology). Each element of the array system can be pictured as an array $x=(x_{k,n})_{k\ge1,n\in\z}$, such that each \emph{symbol} $x_{k,n}$ belongs to $\Lambda_k$. Any finite array of the form 
$$
a=a_{[1,k]\times[0,n-1]}=(a_{i,j})_{1\le i\le k,\,0\le j\le n-1} 
$$
with each entry $a_{i,j}$ belonging to $\Lambda_i$, will be called a \emph{$(k\times n)$-rectangle}.
The product topology is generated by the collection of all cylinder sets corresponding to centered $(k\times(2n\!\!+\!\!1))$-rectangles $a$ (with $k$ and $n$ ranging over $\na$), defined as follows
$$
[a] = \{x:x_{[1,k]\times[-n,n]}=a_{[1,k]\times[0,2n]}\}.
$$ 

The array system is by default regarded with the action of the horizontal shift $\sigma$ given by
$$
(\sigma(x))_{k,n}=x_{k,n+1},\ \ \ x=(x_{k,n})_{k\ge1,n\in\z}.
$$
Speaking about an array we will refer to the indices $k\ge1$ and $n\in\z$ as \emph{vertical} and \emph{horizontal} coordinates (positions), respectively.

Notice that regardless of the cardinalities of alphabets $\Lambda_k$ (as long as each of these cardinalities is at least 2), the product $\prod_k\Lambda_k$ is homeomorphic to the Cantor set $\C$. Thus the system $\prod_k\Lambda_k^\z$ (with the horizontal shift $\sigma$),
is conjugate to $\C^\z$ (with the shift). The latter is the \emph{universal zero-dimensional system} in the sense that any \zd\ system is \tl ly conjugate to a subsystem of $(\C^\z,\sigma)$. We shall call the simplex of \im s of the universal system $(\C^\z,\sigma)$ the \emph{universal simplex}. Recall that it has the affine-topological structure of the \emph{Poulsen simplex}, i.e., its extreme points form a dense subset. 

Let us return to the case of a general \tl\ \ds\ \xt. Let $(\p^{(k)})_{k\in\na}$ be a \sq\ of finite Borel-measurable partitions of $X$. For each $k$, let $\lambda\mapsto P_\lambda$ be a bijection from a finite alphabet $\Lambda_k$ onto $\p^{(k)}$. By the \emph{array-name} of a point $x\in X$ under the action of $T$ with respect to $(\p^{(k)})_{k\in\na}$ we shall mean the array $(x_{k,n})_{k\ge1,n\in\z}$ obtained by the rule:
\begin{enumerate}
\item[$\bullet$] the value $x_{k,n}$ is the symbol $\lambda\in\Lambda_k$ if and only if the point $T^n(x)$ is in the subset $P_\lambda\in\p^{(k)}$ of $X$.
\end{enumerate}
Notice that the closure of all array-names (with respect to $(\p^{(k)})_{k\in\na}$) is an array system. Note that if $X$ is \zd\ then there exists a \sq\ of clopen partitions $(\p^{(k)})_k$ which separate points. It is not hard to see that then \xt\ is \tl ly conjugate to the array system obtained as the collection of all corresponding array-names (which in this case is already closed).

Let \xt\ be a \zd\ system. By an \emph{$n$-marker} we mean a \emph{clopen} set $F\subset X$ such that
\begin{enumerate}
\item[(1)] no orbit visits $F$ twice in $n$ steps  (i.e., $F,T^{-1}F,\dots,T^{-(n-1)}F$ are disjoint),
\item[(2)] every orbit visits $F$ at least once (by compactness, this implies that for some $N\in\na$, we have $F\cup T^{-1}F\cup\dots\cup T^{-(N-1)}F=X$).
\end{enumerate}
We have the following key fact:
\begin{thm}[Krieger's Marker Lemma, aperiodic case]\label{krieger}
If \xt\ is an aperiodic \zd\ system then for every $n\in\na$ there exists an $n$-marker. The parameter $N$ in \emph{(2)} above can be selected equal to $2n-1$.
\end{thm}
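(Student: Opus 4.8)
The plan is to construct $F$ as a \emph{maximal} clopen set satisfying condition~(1), and then to read off condition~(2), with the sharp value $N=2n-1$, from maximality. It is convenient to record the following reformulation first. Since $T$ is a homeomorphism, the disjointness of $F,T^{-1}F,\dots,T^{-(n-1)}F$ is equivalent to the statement that whenever $x\in F$ one has $T^k x\notin F$ for all $1\le|k|\le n-1$; in other words, along every orbit the visits to $F$ are separated by gaps of length at least $n$. I will build $F$ to be maximal for this separation property, i.e.\ so that no point can be adjoined to $F$ without creating a gap shorter than $n$.

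First I would produce a clopen partition $\mathcal P=\{B_1,\dots,B_s\}$ of $X$ each of whose atoms is \emph{$n$-separated}, meaning $B_j\cap T^{-k}B_j=\varnothing$ for $1\le k\le n-1$. This is where aperiodicity and zero-dimensionality enter. Fixing $x\in X$, aperiodicity makes the points $x,Tx,\dots,T^{n-1}x$ pairwise distinct, so in the compact, Hausdorff, \zd\ space $X$ they can be separated by pairwise disjoint clopen sets $C_0,\dots,C_{n-1}$ with $T^i x\in C_i$. Then $U_x=\bigcap_{i=0}^{n-1}T^{-i}C_i$ is a clopen neighbourhood of $x$ with $T^iU_x\subseteq C_i$, so $U_x,TU_x,\dots,T^{n-1}U_x$ are disjoint; that is, $U_x$ is $n$-separated. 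By compactness finitely many $U_{x_1},\dots,U_{x_m}$ cover $X$, and the clopen partition they generate has every (nonempty) atom contained in some $U_{x_i}$, hence $n$-separated.

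Next I would run a greedy pass over the atoms. Setting $F^{(0)}=\varnothing$ and, for $j=1,\dots,s$,
$$
G_j=\Bigl\{x\in B_j:\ T^k x\notin F^{(j-1)}\ \text{for all}\ 1\le|k|\le n-1\Bigr\},\qquad F^{(j)}=F^{(j-1)}\cup G_j,
$$
each $G_j$ is clopen (a finite Boolean combination of the clopen sets $T^{-k}F^{(j-1)}$ intersected with $B_j$), so $F:=F^{(s)}$ is clopen. An easy induction shows each $F^{(j)}$ keeps the separation property: points added inside a single atom are automatically $n$-separated because $B_j$ is, and the defining condition of $G_j$ prevents any new point from landing within $n-1$ steps of $F^{(j-1)}$. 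Moreover $F$ is \emph{maximal}: if $w\notin F$ with $w\in B_j$, then $w\notin G_j$ forces $T^k w\in F^{(j-1)}\subseteq F$ for some $1\le|k|\le n-1$.

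Finally I would deduce~(2). Given any $x$, consider the length-$(2n-1)$ window $x,Tx,\dots,T^{2n-2}x$ and its centre $y=T^{n-1}x$. If $y\in F$ we are done; otherwise maximality gives $T^k y\in F$ for some $1\le|k|\le n-1$, and $T^k y=T^{\,n-1+k}x$ with $0\le n-1+k\le 2n-2$, so the window still meets $F$. Hence $F\cup T^{-1}F\cup\dots\cup T^{-(2n-2)}F=X$, which is precisely~(2) with $N=2n-1$. The \textbf{main obstacle} is the tension between two requirements that pull against each other: $F$ must be clopen (so one cannot simply apply Zorn's lemma to an increasing chain of clopen sets, whose union need not be clopen), yet it must be maximal enough to force the \emph{sharp} bound $2n-1$. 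Carrying out the maximization as a finite greedy sweep over the $n$-separated partition $\mathcal P$ is what reconciles these demands, and the optimal constant $2n-1$ then emerges from invoking maximality at the \emph{centre} of the window rather than at an endpoint.
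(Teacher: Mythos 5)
Your proof is correct: the reformulation of condition (1) as two-sided $n$-separation, the construction of a clopen $n$-separated partition from aperiodicity plus zero-dimensionality, the greedy sweep producing a clopen maximal $n$-separated set, and the window argument centred at $T^{n-1}x$ yielding the sharp bound $N=2n-1$ are all sound. The paper does not prove this lemma itself (it cites the literature for details), and your argument is essentially the standard one found in those references, so there is nothing further to compare.
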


\medskip
\section{Special case}
To prove Theorem \ref{sigma}, we first deal with the following special case which will be used in the main proof.

\begin{thm}\label{main}
Assume that $\Phi$ is an aperiodic assignment on a \zd\ Bauer simplex $K$ and that $\Phi$ can be embedded in a \tl\ \ds. Then $\Phi$ can be realized in a \zd\ system.
\end{thm}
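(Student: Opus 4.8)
The plan is to use the embedding hypothesis to transport the problem into a symbolic (hence \zd) setting and then invoke the already-established realization result \cite{DFace} (item (4) of the introduction). So the first move is to unpack the hypothesis: since $\Phi$ embeds in a \tl\ \ds\ \xt, there is an affine homeomorphism $\pi$ of $K$ onto a face $F\subseteq\mtx$ with $\Phi(e)\approx(X,\pi(e),T)$ for every $e\in\ex K$. Because $K$ is Bauer, $\ex K$ is compact, so $M:=\pi(\ex K)=\ex F$ is a \emph{compact, \zd} subset of $\mtxe$, and by aperiodicity of $\Phi$ every $\mu\in M$ is a non-atomic, aperiodic ergodic measure. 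The goal is thereby reduced to producing a \zd\ system in which $\Phi$ embeds; \cite{DFace} then upgrades this embedding to a realization, using that all measures involved are aperiodic.

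To obtain the \zd\ embedding I would represent \xt\ symbolically through array-names. Concretely, I would seek a \sq\ $(\p^{(k)})_k$ of finite Borel partitions of $X$ with $\diam\to 0$ (so that they separate points of $X$) and such that every cell boundary is null for \emph{every} measure in $M$:
\[
\mu(\partial P)=0 \quad\text{for every } \mu\in M,\ \text{every cell } P\in\p^{(k)},\ k\ge 1.
\]
Let $\Omega\subseteq\prod_k\Lambda_k^\z$ be the array system obtained as the closure of all array-names and let $\phi:X\to\Omega$ be the Borel, equivariant array-name map. Since the partitions separate points, $\phi$ is injective, hence (by Lusin--Souslin) a Borel isomorphism onto its image; in particular, for each $\mu\in M$ it realizes a measure-theoretic isomorphism $(X,\mu,T)\approx(\Omega,\phi_*\mu,\sigma)$, and $\phi_*$ is injective on all of $\mtx$. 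No boundary condition is needed for this. The null-boundary condition enters only to guarantee that $\psi:=\phi_*|_M$ is \emph{continuous}: for any cylinder $[a]$ the set $\phi^{-1}[a]$ has boundary contained in a finite union of shifted cell boundaries, hence is an $M$-universal continuity set, so weak-star convergence inside $M$ passes through $\phi_*$. Consequently $\psi$ is a homeomorphism of $M$ onto a compact set $M'\subseteq\mathcal M^e_\sigma(\Omega)$ of ergodic measures, each isomorphic to the corresponding $\Phi(e)$.

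It remains to package this as an embedding of assignments. The set $F':=\overline{\mathrm{conv}}(M')$ is a Bauer simplex with $\ex F'=M'$ (by Milman's theorem, together with the fact that each point of $M'$ is ergodic, hence extreme in $\mathcal M_\sigma(\Omega)$), and it is a face of $\mathcal M_\sigma(\Omega)$. Since two Bauer simplices with homeomorphic sets of extreme points are affinely homeomorphic, the homeomorphism $\psi$ extends uniquely to an affine homeomorphism $F\to F'$; composing with $\pi$ gives an affine homeomorphism $K\to F'$ carrying each $e\in\ex K$ to $\phi_*\pi(e)$. Because both $\Phi$ and the natural assignment of $(\Omega,\sigma)$ are harmonic, the isomorphisms $\Phi(e)\approx(\Omega,\phi_*\pi(e),\sigma)$ verified on extreme points show that $\Phi$ is equivalent to the face of the natural assignment of $(\Omega,\sigma)$ over $F'$; that is, $\Phi$ embeds in the \zd\ system $(\Omega,\sigma)$. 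An application of \cite{DFace} then finishes the proof.

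The main obstacle is plainly the construction of partitions with $M$-universally null boundaries --- a ``small boundary property relative to $M$''. For a single non-atomic measure this is routine (most level sets of a given continuous function are null), but for the whole family the bad levels of a fixed function may form an uncountable set, so one cannot simply fix a shape, e.g.\ metric balls, and perturb the radii. Here I would exploit the \emph{\zd ity of $M$}: it should allow the boundaries to be routed through $M$-null sets by separating the measures of $M$ with continuous functions whose level sets meet the relevant supports in null sets --- as in the model case of measures carried by disjoint fibres indexed by a Cantor set, where the cuts transverse to the fibres can be placed in the dense complement of the ``bad'' levels. Turning this heuristic into a single choice uniform over all of $M$ and refining it to $\diam\to 0$ is the technical heart of the argument.
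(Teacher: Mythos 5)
Your overall architecture matches the paper's: reduce to an \emph{embedding} of $\Phi$ in a \zd\ system via a symbolic (array-name) representation whose relevant cell boundaries are null, observe that nullity of boundaries makes the induced map on measures continuous, pass to the Bauer face spanned by the image, and finally invoke \cite[Theorem 4.1]{DFace} to upgrade the embedding to a realization. All of that is sound. The problem is that the one step you defer --- producing a single \sq\ of partitions $(\p^{(k)})_k$ of $X$ with $\diam\to 0$ whose cell boundaries are null for \emph{every} $\mu\in M$ simultaneously --- is exactly the step the paper does \emph{not} perform, and for good reason. As you yourself note, for a fixed ball center only countably many radii are bad for a fixed measure; but $M$ is in general uncountable, so the union of the bad radii over all $\mu\in M$ can a priori exhaust the whole interval of admissible radii. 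Your appeal to the \zd ity of $M$ to ``route the boundaries through $M$-null sets'' is only a heuristic, and no mechanism is given for making one choice of partition that works uniformly over $M$. This is a genuine gap: the claimed ``small boundary property relative to $M$'' is the entire content of the theorem's difficulty, and nothing in your sketch establishes it.

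The paper circumvents this obstacle rather than overcoming it. It builds a \emph{one-parameter family} of partitions $\p^{(k)}_t$, $t\in[0,1]$, generated by balls with fixed centers and radii varying continuously in $t$; it proves that $(\mu,t)\mapsto\mu(\partial\p^{(k)}_t)$ is jointly \usc\ (Lemma \ref{usc}); and then, using compactness and \zd ity of $\ex K$ (this is where the Bauer hypothesis really enters), it constructs a \emph{continuous selector} $s:\ex K\to\C$ with $\mu(\partial\p^{(k)}_{s(\mu)})=0$ for all $k$ (Lemma \ref{selector}). Since different measures are served by different partitions, the symbolic coding is performed not on $X$ but on $X\times\C$, with each $\mu$ lifted to $\mu\times\delta_{s(\mu)}$; the resulting array system is the desired \zd\ embedding, and the rest of the argument (compactness of the image, ergodicity, the face spanned by it, injectivity of the affine prolongation between Bauer simplices, then \cite[Theorem 4.1]{DFace}) proceeds essentially as you describe. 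To salvage your version you would either have to prove the uniform null-boundary statement for all of $M$ at once --- which is not established and may fail --- or adopt the parametrized-partition-plus-selector device, which is the technical heart of the paper's proof.
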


Theorem \ref{main} will be proved in two major steps. At first, in Theorem \ref{submain} below, we give a slightly weaker statement, saying that the assignment $\Phi$ can be \emph{embedded} in a \zd\ system. We will take care of surjectivity later.

\begin{thm}\label{submain}
Assume that $\Phi$ is an aperiodic assignment on a \zd\ Bauer simplex $K$ and that $\Phi$ can be embedded in a \tl\ \ds. Then $\Phi$ can be embedded in a \zd\ system.
\end{thm}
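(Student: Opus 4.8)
The plan is to build the required \zd\ system as an \emph{array system} that codes the given embedding, organised as a bundle over the compact \zd\ parameter space $Z:=\ex K$. Since $\Phi$ embeds in a \tl\ \ds\ \xt, I may identify $K$ with a face $F\subset\mtx$ and write $\Phi(e)\approx(X,\mu_e,T)$ for the corresponding ergodic measures $\mu_e\in\mtxe$, $e\in Z$; because $K$ is Bauer, $e\mapsto\mu_e$ is a homeomorphism of $Z$ onto a \emph{compact} subset of $\mtxe$. Thus the hypothesis supplies exactly what a free-standing family of abstract ergodic systems would not: a single compact space $X$ carrying all the $\Phi(e)$ as a weak-star continuous family, and this continuity is the raw material for a uniform coding.

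First I would fix a refining \sq\ of finite clopen partitions $\mathcal Q^{(k)}$ of $Z$ separating the points of $Z$, together with a refining \sq\ of finite Borel partitions $\p^{(k)}$ of $X$ generating the Borel $\sigma$-algebra. Reading the $\p^{(k)}$-array-name of a point $x$ (its itinerary, as in Section~\ref{sigma}'s preliminaries) produces the \emph{dynamic} rows of an array, while recording the $\mathcal Q^{(k)}$-address of the parameter $e$ to which $x$ belongs produces finitely many \emph{static} rows, constant along each orbit. On the full-measure set of points generic for some $\mu_e$ the assignment $x\mapsto e$ is Borel, so this rule defines a Borel, shift-equivariant map into a product $\prod_k\Lambda_k^\z$; let $\tilde Y$ be the closure of its image and $\nu_e$ the push-forward of $\mu_e$. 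The static rows are constant in the horizontal coordinate and encode a point of the closed set $Z$, so they furnish a continuous factor map $p:\tilde Y\to(Z,\id)$ onto trivial dynamics. Consequently every ergodic measure of $\tilde Y$ is carried by a single fibre $\tilde Y_e:=p^{-1}(e)$, and the dynamic coding, being generating, gives $(\tilde Y_e,\sigma,\nu_e)\approx\Phi(e)$.

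The decisive point is then to arrange that each fibre $\tilde Y_e$ be \emph{uniquely ergodic}, with $\nu_e$ its unique invariant measure. Granting this, a compactness argument (any weak-star limit of $\nu_{e_n}$ along $e_n\to e$ is invariant and projects to the point mass at $e$, hence is carried by $\tilde Y_e$ and therefore equals $\nu_e$) makes $e\mapsto\nu_e$ continuous and injective, so $\{\nu_e:e\in Z\}$ is a compact set of extreme points; it follows that the face of $\mathcal M_\sigma(\tilde Y)$ spanned by these measures is a \zd\ Bauer simplex affinely homeomorphic to $K$ via $e\mapsto\nu_e$, and harmonic, since barycenters in both simplices integrate over $Z$. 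This realises, \emph{a fortiori} embeds, $\Phi$ in the \zd\ system $\tilde Y$. The engine for forcing unique ergodicity is the marker technology of Theorem~\ref{krieger}: over each fibre one replaces the raw itinerary by a Kakutani--Rokhlin tower coding in the spirit of the Jewett--Krieger theorem, choosing block names so that the empirical frequencies of finite rectangles converge, uniformly in the starting point, to their $\mu_e$-integrals.

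The main obstacle is precisely to perform this Jewett--Krieger type block construction \emph{simultaneously and continuously over the whole compact base} $Z$: the marker sets, tower heights and block codes must be chosen so that the fibrewise unique-ergodicity estimates hold with constants independent of $e$ (exploiting the weak-star continuity of $e\mapsto\mu_e$ and compactness of $Z$), so that the resulting bundle $\bigcup_e\tilde Y_e$ is closed, and so that no stray invariant measures straddling distinct fibres are created. I expect this uniform, relative coding to absorb essentially all of the difficulty; the measure isomorphism on each fibre and the verification that the spanned set is a genuine \emph{face} are comparatively routine once the construction is in hand. Since at this stage only an \emph{embedding} is demanded, one may in fact relax the full unique-ergodicity requirement to the weaker demand that $\{\nu_e\}$ be closed and extreme, deferring the passage from embedding to full realisation to the result \cite{DFace} quoted in the introduction.
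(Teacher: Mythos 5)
There is a genuine gap, and it sits exactly where you say you ``expect the coding to absorb essentially all of the difficulty.'' Your construction starts from an arbitrary refining \sq\ of finite \emph{Borel} partitions $\p^{(k)}$ of $X$. The resulting array-name map is only Borel, so the closure $\tilde Y$ of its image is not under control: it may acquire invariant measures unrelated to the $\nu_e$, the map $e\mapsto\nu_e$ has no a priori continuity, and nothing guarantees that $\{\nu_e:e\in\ex K\}$ is closed or spans a face. The standard way to tame this is to use partitions whose topological boundaries are null for the measures one cares about, so that the symbolic extension is one-to-one on a set of full measure; but here one must handle a possibly uncountable compact family $\{\mu_e\}$ of ergodic measures simultaneously, and a single \sq\ of partitions with boundaries null for \emph{all} of them need not exist. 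This is precisely the obstacle the paper's proof is built around: it introduces a one-parameter family $\p^{(k)}_t$, $t\in\C$, of partitions, proves upper semicontinuity of $(\mu,t)\mapsto\mu(\partial\p^{(k)}_t)$ (Lemma \ref{usc}), and then uses compactness and zero-dimensionality of $\ex K$ to produce a \emph{continuous selector} $s:\ex K\to\C$ with all boundaries $\mu$-null at parameter $s(\mu)$ (Lemma \ref{selector}). Coding $X\times\C$ under $T\times\id$ then yields a \zd\ extension $\pi_0:X''\to X\times\C$ that is one-to-one $\mu\times\delta_{s(\mu)}$-a.e., and the unique-preimage property gives both the isomorphism $\mu''\approx\mu$ and, for free, the continuity and compactness needed to identify a Bauer face $K''$. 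No unique ergodicity of fibres is needed anywhere. Your proposal contains no substitute for this selection mechanism.

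The fallback you offer --- a Jewett--Krieger-type block construction carried out ``simultaneously and continuously over the whole compact base $Z$'' to make every fibre uniquely ergodic --- is not a routine step that can be deferred: a parametrized, topologically continuous relative Jewett--Krieger theorem over a compact \zd\ base is itself a result of at least the same order of difficulty as Theorem \ref{submain} (constructions of this type are the substance of \cite{DFace} and \cite{KO}, and even there they are performed inside an already \zd\ ambient system, which is exactly what is not yet available at this stage). Your closing remark, that one could settle for $\{\nu_e\}$ being closed and extreme rather than fibrewise uniquely ergodic, is indeed the right target --- it is what the paper achieves --- but the proposal gives no mechanism for reaching it. So the architecture (an array bundle over $Z=\ex K$ with static rows recording the parameter) is reasonable and broadly compatible with the paper's, but the proof is missing its central device.
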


\begin{proof}
Let \xt\ be a \tl\ \ds. Assume that $K$ is a \zd\ Bauer simplex which is a face in $\mtx$, and that $K$ contains no periodic measures. We need to construct a \zd\ system \xtd\ and a face $K''$ of $\mtxd$ such that the assignments on $K$ and $K''$ obtained as the restrictions of the natural assignments arising from \xt\ and \xtd, respectively, are equivalent.

We start by inductively constructing a one-parameter family $(\p^{(k)}_t)_{k\in\na}$ with $t\in[0,1]$ of \sq s of partitions of $X$, such that
$$
\lim_k\diam(\p^{(k)}_t)=0,
$$
uniformly in $t$.

Fix a decreasing to zero \sq\ $(r_1^{(k)})_{k\in\na}$. For each $k$ choose a finite open cover $\U_1^{(k)}$ of $X$ which consists of open balls $B^{(k)}(x_1^{(k)},r_1^{(k)}),\dots,B^{(k)}(x_{m_k}^{(k)},r_1^{(k)})$ of radius $r_1^{(k)}$. There exists a positive number $r_0^{(k)}<r_1^{(k)}$ such that the balls $B^{(k)}(x_1^{(k)},r_0^{(k)}),\dots,B^{(k)}(x_{m_k}^{(k)},r_0^{(k)})$ still form a cover $\U_0^{(k)}$ of $X$. The covers $\U_t^{(k)}$ for $t\in[0,1]$ are then constituted by the balls $B^{(k)}(x_1^{(k)},r_t^{(k)}),\dots,B^{(k)}(x_{m_k}^{(k)},r_t^{(k)})$ with radii $r_t^{(k)}\in[r_0^{(k)},r_1^{(k)}]$ depending continuously and increasingly on $t$.

Next, for each $k\in\na$ and $t\in[0,1]$, we let $\p^{(k)}_t$ be the finite partition of $X$ consisting of all intersections of the form
$$
P_t^\eta=\bigcap_{i=1}^{m_k}(B_i)^{\eta(i)},
$$
where $\eta:\{1,\dots,m_k\}\to\{0,1\}$, $\U_t^{(k)}=\{B_1,\dots,B_{m_k}\}$, $(B_i)^0=B_i$, $(B_i)^1=X\setminus B_i$ (some of the sets $P_t^\eta$ may be empty). Since for $\eta\equiv 1$ the set $P_t^\eta$ is empty (and we agree that the empty set has diameter zero), all members $P_t^\eta$ of $\p^{(k)}_t$ have diameters at most $2r_1^{(k)}$. Thus, $\lim_k\diam(\p^{(k)}_t)=0$, uniformly in $t$, as required.
\medskip

For a family $\p$ of subsets of $X$ we denote by $\partial\p$ the union of all boundaries of the members of $\p$. Notice that $\partial\p^{(k)}_t=\partial\U_t^{(k)}$, which equals the union of boundaries of finitely many balls of radius $r_t^{(k)}$ and with centers not depending on $t$. Since the boundary of a ball of radius $r$ is contained in the sphere of radius $r$, the boundaries of balls with a common center and different radii are pairwise disjoint (in a zero-dimensional space two balls with a common center and of different radii may be equal, but then their boundary is necessarily empty). Thus at most countably many of the boundaries of balls with a common center may have positive measure for a fixed probability measure $\mu$. It follows that the set
$$
I_\mu^+=\Bigl\{t\in[0,1]:\mu\bigl(\bigcup_{k\in\na}\partial\p_t^{(k)}\bigr)>0\Bigr\}
$$
is at most countable. Thus, for any $\mu\in\mtxe$, the set
$$
I_\mu^0=\C\setminus I_\mu^+
$$
is a dense subset of the classical Cantor set $\C\subset[0,1]$.
\medskip

We suspend the main proof for a while and prove two auxiliary lemmas.
The first one is about the following upper semicontinuity:

\begin{lem}\label{usc}
For every $k\in\na$ the function
$$
\psi^{(k)}:\mtxe\times[0,1]\to[0,1],\;\,\;(\mu,t)\mapsto\mu(\partial\p_t^{(k)})
$$
is \usc\ (of two variables).
\end{lem}

\begin{proof}
Fix $k\in\na$. We have
$$
\partial\p^{(k)}_t=\bigcup_{i=1}^{m_k}\partial B^{(k)}(x_i^{(k)},r_t^{(k)}).
$$
Hence,
$$
\mathbbm 1_{\partial\p^{(k)}_t}= \max_{1\le i\le m_k}\mathbbm 1_{\partial B^{(k)}(x_i^{(k)},r_t^{(k)})}.
$$

Given $t\in[0,1]$ and $d>0$, we denote by $h^{(k)}_{t,d}:\mathbb R\to[0,1]$ the continuous tent function assuming the value $1$ at $r_t^{(k)}$ and $0$ outside the interval $[r_t^{(k)}-d,r_t^{(k)}+d]$. Now, for $1\le i\le m_k$, we define, for all $x\in X$,
$$
g_{t,d,i}^{(k)}(x) = h^{(k)}_{t,d}(\dist(x,x_i^{(k)})) \text{ \ and \ } g_{t,d}^{(k)}=\max_{1\le i\le m_k}g_{t,d,i}^{(k)}.
$$
Clearly, for each $t$ and $d$, $g_{t,d}^{(k)}$ is a continuous function, moreover, for fixed $d>0$, the family of functions $\{g_{t,d}^{(k)}:t\in[0,1]\}$ is equicontinuous, which easily implies the double continuity of
$$
(\mu,t)\mapsto\int g_{t,d}^{(k)}\,d\mu
$$
on $\mtxe\times[0,1]$.

Further, as $d\to0$, $g_{t,d}^{(k)}$ tends non-increasingly to $\mathbbm 1_{\partial\p^{(k)}_t}$, hence, by the Dominated Lebesgue Theorem, $\int g_{t,d}^{(k)}\,d\mu$ tends non-increasingly to $\mu(\partial\p_t^{(k)})=\psi^{(k)}(\mu,t)$. Since a non-increasing limit of continuous functions is \usc, we have completed the proof of the lemma.
\end{proof}

Our next goal is to prove the following ``continuous selector lemma'':
\begin{lem}\label{selector}
Let $K$ be as in the formulation of Theorem \ref{submain}. Then there exists a continuous function $s:\ex K\to\C$ satisfying $s(\mu)\in I_\mu^0$, for every $\mu\in\ex K$.
\end{lem}

\begin{proof} We can assume that $\diam(\ex K)=1$.
For every natural number $n$, we will inductively define a finite clopen partition $\{K^{(n)}_1,\cdots,K^{(n)}_{l_n}\}$ of $\ex K$ and a finite family $\{W^{(n)}_1,\cdots,W^{(n)}_{l_n}\}$ of clopen subsets of $\C$ having, for each $n\in\na$, the following properties:
\begin{enumerate}
\item [(1)] $\max\{\diam(K^{(n)}_i),\diam(W^{(n)}_i)\}\le2^{1-n}$, for every $1\le i\le l_n$,
\item [(2)] if $n>1$ then for each $1\le i\le l_n$ there is some $1\le j\le l_{n-1}$ such that
$K^{(n)}_i\times W^{(n)}_i$ is a subset of $K^{(n-1)}_j\times W^{(n-1)}_j$,
\item [(3)] $\psi^{(k)}(K^{(n)}_i\times W^{(n)}_i)\subset[0,2^{1-n}]$, for every $0\le k\le n$ and
$1\le i\le l_n$.
\end{enumerate}

\medskip

To begin with, we set $l_1=1$, $K^{(1)}_1=\ex K$ and $W^{(1)}_1=\C$. Clearly, the conditions (1) and (3) are fulfilled. Next, we fix an $n\ge 2$ and suppose that $\{K^{(n-1)}_1,\cdots,K^{(n-1)}_{l_{n-1}}\}$ and $\{W^{(n-1)}_1,\cdots,W^{(n-1)}_{l_{n-1}}\}$ have been defined. For each $\mu\in\ex K$ there is a unique $1\le j_\mu\le l_{n-1}$ with $\mu\in K^{(n-1)}_{j_\mu}$.

By Lemma \ref{usc} (noting that $\ex K\subset\mtxe$) and since $I_\mu^0$ is dense in $\C$, there exist clopen subsets
$K^{(n)}_\mu\subset K^{(n-1)}_{j_\mu}$ and $W^{(n)}_\mu\subset W^{(n-1)}_{j_\mu}$
with $\mu\in K^{(n)}_\mu$ and
$$
\max\{\diam(K^{(n)}_\mu),\diam(W^{(n)}_\mu)\}<2^{1-n},
$$
satisfying
$$
\psi^{(k)}(K^{(n)}_\mu\times{W^{(n)}_\mu})\subset[0,2^{1-n}],
$$
for all $0\le k\le n$.

By compactness, $\ex K$ can be covered by finitely many sets $K^{(n)}_{\mu_{n,1}},\dots,K^{(n)}_{\mu_{n,l_n}}$. Since these sets are clopen, by subsequent subtracting we can make them disjoint (and still covering $\ex K$) and denote as $K^{(n)}_1,\dots,K^{(n)}_{l_n}$. Correspondingly, we also enumerate $W^{(n)}_{\mu_{n,1}},\dots,W^{(n)}_{\mu_{n,l_n}}$ as $W^{(n)}_1,\dots,W^{(n)}_{l_n}$. Clearly, the properties (1), (2) and (3) are now satisfied for $n$.

\medskip
Once the induction is completed, we continue as follows. Given $n\in\na$, let $s_n:\ex K\to\C$ be a simple function assuming on each set $K^{(n)}_i$ a constant value $t_i^{(n)}\in{W^{(n)}_i}$ ($1\le i\le l_n$). Since the sets $K^{(n)}_i$ are clopen, $s_n$ is continuous. Finally, let $s:\ex K\to\C$ be the limit function of $(s_n)_{n\in\na}$. By (1) and (2) $s$ is a uniform limit of a \sq\ of continuous functions, hence $s$ is continuous as well. Also, the conditions (2) and (3) imply that $\psi^{(k)}(\mu,s(\mu))\le2^{-n}$, for each $\mu\in\ex K$, $n\in\na$ and every $k\le n$. This yields $\psi^{(k)}(\mu,s(\mu))=0$, for all $\mu\in\ex K$ and every $k\in\na$. Thus, for any $\mu\in\ex K$ we have $s(\mu)\in I_\mu^0$, which ends the proof of the lemma.
\end{proof}

\medskip
We return to the proof of Theorem~\ref{submain}. Recall that we aim to constructing a \zd\ system \xtd\ and a continuous affine injection $\pi'':K\to\mtxd$ such that for any $\mu\in K$ and $\mu'' = \pi''(\mu)$, the systems \xmt\ and \xmtd\ are measure-theoretically isomorphic. The mapping $\pi''$ is not going to be surjective and its image is going to be a face $K''$ of $\mtxd$.
\medskip

For a fixed $k\in\na$ and any $\eta\in\{0,1\}^{\{1,\dots,m_k\}}$ we let
$$
P^\eta=\bigcup_{t\in\C}(P_t^\eta\times\{t\}).
$$
Now, the collection
$$
\p^{(k)}=\{P^\eta:\eta\in\{0,1\}^{\{1,\dots,m_k\}}\}
$$
is a finite measurable partition of $X\times\C$ labeled by the elements $\eta$. Let $N(x,t)$ be the array-name of the point $(x,t)\in X\times\C$ under the action $T\times\id$ with respect to the \sq\ of partitions $(\p^{(k)})_{k\in\na}$ of $X\times\C$, which uses the labels $\eta\in\{0,1\}^{\{1,\dots,m_k\}}$ as the alphabet in the $k$th row of an array (notice that if $P_t^\eta$ is empty, the symbol $\eta$ will not appear in $N(x,t)$ for any $x\in X$). Set
$$
X''=\overline{\{(N(x,t),t):(x,t)\in X\times\C\}}.
$$
Let $T'':X''\to X''$ be given by $T''=\sigma\times\id$, where $\sigma$ denotes the shift on arrays. By a standard argument, the \ds\ \xtd\ is a \zd\ extension of $X\times\C$ via a factor mapping $\pi_{0}:X''\to X\times\C$ which is 1-1 (has singleton fibers) except for points $(x,t)$ whose orbits visit the boundary of some member of $\p^{(k)}$. The mapping $\pi_0$ yields a continuous mapping $\pi_0^*$ from $\mtxd$ onto $\mathcal M_{T\times\id}(X\times\C)$. So, by a simple argument (see \cite[Lemma 4.1]{DSurvey}), the inverse mapping defined on the set of measures which have a unique preimage, is a homeomorphism in the relative topologies.

We note here that the section at a level $t$ of $\partial\p^{(k)}$ is contained in $\partial\U^{(k)}_t=\partial\p^{(k)}_t$. In fact, if $x\in X\setminus\partial\p^{(k)}_t$ then
$x$ belongs to some $P^\eta_t$ together with some $\delta$-ball around $x$, which implies that
for all $t'$ sufficiently close to $t$, the $(\delta/2)$-ball around $x$ is contained in
$P^\eta_{t'}$. This implies that $(x,t)$ is not in $\partial P^\eta$, i.e., $x$ is not in the $t$-section of this boundary.

Let $s$ be the continuous selector function defined in the statement of Lemma~\ref{selector}. For $\mu\in\ex K$ we have $\mu\times\delta_{s(\mu)}\in\mathcal{M}^e_{T\times\id}(X\times\C)$, where $\delta_t$ denotes the Dirac measure at $t$. Note that
$$
\mu\times\delta_{s(\mu)}(\bigcup_{k\in\na}\partial\p^{(k)})\le
\mu(\bigcup_{k\in\na}\partial\p^{(k)}_{s(\mu)})=0.
$$
Therefore the mapping $\pi_0$ is 1-1 on a set of full measure $\mu\times\delta_{s(\mu)}$, which implies that $\mu\times\delta_{s(\mu)}$ has a unique preimage by $\pi_0^*$ (which we denote by $\mu''$) and, moreover, the system \xmtd\ is measure-theoretically isomorphic to $(X\times\C,\mu\times\delta_{s(\mu)},T\times\id)$ (and hence, trivially, via the projection onto the first coordinate, to \xmt). Clearly, the mapping $(\mu,t)\mapsto\mu\times\delta_t$ is continuous from $K\times\C$ to $\mathcal{M}_{T\times\id}(X\times\C)$. Further, the graph of $s$ is a compact subset of the domain of this mapping and maps bijectively onto the set $\{\mu\times\delta_{s(\mu)}:\mu\in\ex K\}$, hence the latter set is also compact. Since, as we have observed earlier, the points in the latter set have singleton preimages by $\pi_0$, the inverse of $\pi_0^*$ is a homeomorphism between the set $\{\mu\times\delta_{s(\mu)}:\mu\in\ex K\}$ and its preimage by $\pi^*_0$ (i.e., the set $\{\mu'':\mu\in\ex K\}$), which is hence compact as well. The composition
$$
\mu''\mapsto\mu\times\delta_{s(\mu)}\mapsto(\mu,s(\mu))\mapsto\mu
$$
serves as a homeomorphism between $\{\mu'':\mu\in\ex K\}$ and $\ex K$, moreover, the measures corresponding to each other by this mapping are isomorphic.

Using the ergodic decomposition, the above mapping can be prolonged\footnote{We use the term ``prolongation'' (of a function) for what is customary described as ``extension'', because the word ``extension'' has in this note a defined meaning (opposite to ``factor'').} to an affine and continuous mapping from the compact convex hull spanned by the set $\{\mu'':\mu\in\ex K\}$ onto $K$. Since the measures $\mu''$ are ergodic (being isomorphic to the corresponding ergodic measures $\mu$), this compact convex hull is a face of $\mtxd$. Since both simplices are Bauer, the prolongation is injective (see e.g. \cite[Appendix A.2.5]{DEntropy}) and hence a homeomorphism (which is obviously affine), and maintains the property that the measures corresponding to each other by this mapping are isomorphic. Now, we can define $\pi''$ as the inverse of the above prolonged mapping.
\end{proof}

\medskip
\begin{proof}[Proof of Theorem \ref{main}]
Theorem \ref{main} now becomes a direct con\sq\ of Theorem~\ref{submain} and \cite[Theorem 4.1]{DFace} applied to the system \xtd\ and the compact convex hull spanned by the set $\{\mu'':\mu\in K\}$, which is a face $K''$ in $\mtxd$. Each measure $\mu''$ is isomorphic to some \im\  $\mu\in K$, where $K$ is assumed to not contain periodic measures. So $\mu''$ is aperiodic. Thus the face $K''$ contains no periodic measures, as required in the cited theorem.
\end{proof}

\medskip
\section{Proof of the main result}
The proof of Theorem \ref{sigma} relies on the following three lemmas.
\begin{lem}\label{dense}
The simplex of \im s $\mtx$ of any aperiodic \zd\ system \xt\ ``appears densely'' in the universal simplex, namely, inside any open subset of the universal simplex, we can find a face $\widetilde K$ such that the natural assignment of the universal system restricted to $\widetilde K$ is a copy of $\mtx$ (we will briefly say that $\widetilde K$ is a copy of $\mtx$).
\end{lem}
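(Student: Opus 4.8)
The plan is to place a \emph{conjugate copy} of \xt\ directly inside $U$ and to exploit the elementary fact that a topological conjugacy automatically preserves the natural assignment. Fix a basic weak-star neighbourhood $U=\{m:|\int f_i\,dm-\int f_i\,d\nu|<\varepsilon,\ 1\le i\le r\}$ of some $\nu$ in the universal simplex, where the $f_i$ are cylinder functions depending only on the rows $1,\dots,d$ and the horizontal window $[-M,M]$. Writing the universal system as $\prod_k\Lambda_k^\z$, any closed $\sigma$-invariant $Z\subset\prod_k\Lambda_k^\z$ has $\mathcal M_\sigma(Z)$ a face (if $m(Z)=1$ and $m=\frac12(m_1+m_2)$ then $m_i(Z)=1$ since $Z$ is closed). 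Hence it suffices to build a homeomorphism $\iota:X\to\prod_k\Lambda_k^\z$ with $\iota\circ T=\sigma\circ\iota$ onto $Z=\iota(X)$ such that $\iota_*\mu\in U$ for \emph{every} $\mu\in\mtx$; then $\widetilde K=\mathcal M_\sigma(Z)=\iota_*(\mtx)$ is a face contained in $U$, and it is a copy of $\mtx$ because $\iota$ is a conjugacy. Since the $f_i$ read only rows $1,\dots,d$, I will carry a faithful generating array-name of \xt\ in the rows $k>d$ (possible because $X$ is \zd, so it admits a separating \sq\ of clopen partitions), while filling rows $1,\dots,d$ with a \emph{statistical decoration} whose short-window frequencies match $\nu$ \emph{uniformly over all invariant measures}. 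By density of extreme points in the Poulsen simplex I may first replace $\nu$ by a nearby ergodic measure and shrink $\varepsilon$, so that $\nu$-generic points exist.

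The decoration is built from markers. By the Marker Lemma (Theorem~\ref{krieger}), for large $n$ the aperiodic \zd\ system \xt\ admits an $n$-marker $F$, whose return times cut every orbit into consecutive gaps of length $\ell\in[n,2n-1]$. Using a point generic for the projection $\bar\nu$ of $\nu$ to rows $1,\dots,d$, I choose for each $\ell\in[n,2n-1]$ a block $\beta_\ell$ over $\prod_{k\le d}\Lambda_k$ whose internal frequencies of $[-M,M]$-patterns lie within $\varepsilon/2$ of those of $\bar\nu$. I then define the decoration at a position $t$ to be the symbol of $\beta_\ell$ at the offset of $t$ within the gap containing it; reading the nearest markers before and after $t$ requires only a window of radius $2n$, so this is a continuous, shift-equivariant sliding block code (crucially, it rescales \emph{no} time, unlike a suspension/filler construction).

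With this decoration in place, along any orbit the row-$1,\dots,d$ name is a concatenation of the good blocks $\beta_\ell$, so for each $[-M,M]$-pattern $p$ the Birkhoff averages of $\mathbbm 1[\,\cdot\in[p]\,]$ deviate from $\bar\nu([p])$ by at most $\varepsilon/2$ (internal error) plus $O(M/n)$ (at most one block boundary per $n$ positions). Choosing $n>4M/\varepsilon$ makes this bound uniform in the orbit and in the length of the averaging window, hence $\int f_i\,d(\iota_*\mu)$ lies within $\varepsilon$ of $\int f_i\,d\nu$ for \emph{every} $\mu\in\mtx$, giving $\iota_*\mu\in U$. Setting $\iota=(\text{decoration in rows}\le d;\ \text{generating name in rows}>d)$ yields a continuous shift-equivariant injection (injectivity coming from the rows $>d$ alone), so $Z=\iota(X)$ is a subsystem conjugate to \xt\ with $\mathcal M_\sigma(Z)\subset U$, as required.

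I expect the main obstacle to be exactly this uniform, \emph{measure-independent} frequency control: a bounded-radius relabelling can never squeeze the statistics of all ergodic measures of $X$ (which may range from low-complexity to Bernoulli) into one small ball, so a naive sliding block code fails, while slowing the system down by inserting filler would change entropy and destroy the measure-theoretic isomorphism type. The resolution — putting the $\nu$-matching data in \emph{parallel} rows governed by a marker skeleton, so that the combinatorics forces uniform frequencies without any time rescaling — is the technical heart, and the only quantitative point to verify carefully is that the boundary-defect density is $O(M/n)$ and thus negligible for large $n$.
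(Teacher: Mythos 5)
Your proposal is correct and follows essentially the same route as the paper: reduce to a basic neighbourhood of an ergodic measure determined by finitely many rectangle/cylinder frequencies, keep a faithful (generating) copy of the array-name of $X$ in some rows to guarantee conjugacy, and use a Krieger $n$-marker skeleton to paste content-independent blocks taken from a generic point into the remaining rows, with the same $\varepsilon/2$ internal error plus $O(M/n)$ boundary-defect estimate. The only cosmetic difference is that the paper adds $N_0$ fresh rows below the original array and re-indexes, whereas you overwrite the first $d$ rows and park the generating name above them; this changes nothing.
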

\begin{proof}
Take any aperiodic \zd\ system $(X,\sigma)$. Since $(X,\sigma)$ is conjugate to a subsystem of the universal system, we can assume that $X$ is an invariant closed subset of $\C^\z$ and $\sigma$ denotes the shift. Let $(k_i,n_i)$ be a \sq\ of integer-valued vectors such that both coordinates increase to infinity as $i$ grows, and let $\R_i$ denote the collection of all $(k_i\times n_i)$-rectangles. Note that all open sets in the universal simplex of the form
$$
U(\mu,i_0,\epsilon)=\{\nu:|\mu(R)-\nu(R)|<\epsilon\text{ for all }R\in\R_i\text{ and }i\le i_0\}
$$
with $\mu$ ranging over ergodic measures, $i_0\in\na$, and $\epsilon>0$,
form a base of the weak-star topology. Fix a set $U(\mu,i_0,\epsilon)$ in the universal simplex. There exists a point $x_0\in\C^\z$ generic for $\mu$, and a constant $N_0>\max\{2n_{i_0}/\epsilon,k_{i_0}\}$, such that for each $N\ge N_0$ the frequency of occurrences of any $R\in\R_i$ with $i\le i_0$ in the rectangle $[1,N_0]\times[0,N]$ of $x_0$ equals $\mu(R)$ up to an error of $\epsilon/2$. In $X$ there exists a clopen $N_0$-marker $F$ visited by orbit of each $x\in X$ with gaps ranging between $N_0$ and $2N_0-1$. For each $x\in X$, we define a new array $\widetilde x$ with rows enumerated from $-N_0+1$ to $+\infty$ as follows: For $i>0$ and any $n\in\z$, $\widetilde x(i,n)=x(i,n)$. The contents of the rows with indices in $[-N_0+1,0]$ is described below: Let $n$ and $n+N$ be two consecutive times of visits of the orbit of $x$ in $F$. We define $\widetilde x$ on the rectangle $[-N_0+1,0]\times[n,n+N-1]$ by
$$
\widetilde x(i-N_0,n+k)=x_0(i,k),\ \ \ i\in[1,N_0], k\in[0,N-1].
$$
Let $\phi(x)$ denote the array $\widetilde x$ with the enumeration of rows shifted so that the rows of $\phi(x)$ are indexed from $1$ to $+\infty$. From the construction of $\phi$ we see that $\phi:X\to\phi(X)\subset\C^\z$ is continuous and one-to-one, and thus, $\phi(X)$ is \tl ly conjugate to $X$, so $\mathcal{M}_\sigma(\phi(X))$ is a copy of $\mathcal{M}_\sigma(X)$. By the choice of $N_0$, it is not hard to estimate that for any $x\in X$, any rectangle $R\in\R_i$ with $i\le i_0$ occurs in $\phi(x)$ with frequency equal to $\mu(R)$ up to an error of $\epsilon$. This proves that $\mathcal{M}_\sigma(\phi(X))$ is a subset of $U(\mu,n_0,\epsilon)$.
\end{proof}

\begin{lem}\label{assume}
Suppose that $E$ is a $\sigma$-compact \zd\ set represented as a countable union $\bigcup_{n\ge 1}E_n$, where every set $E_n$ is compact. Then $E=\bigcup_{n\ge 1}\widetilde E_n$, where the sets $\widetilde E_n$ are compact, pairwise disjoint, and each $\widetilde E_n$ is contained in some $E_m$.
\end{lem}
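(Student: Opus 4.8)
The plan is to carry out the familiar disjointification of the union $\bigcup_n E_n$ and then to split each of the resulting (generally non-compact) pieces into countably many compact clopen fragments, the latter step being exactly where \zd ity is indispensable. First I would set $F_0=\emptyset$, $F_{n-1}=E_1\cup\dots\cup E_{n-1}$, and put $A_n=E_n\setminus F_{n-1}$. These sets are pairwise disjoint, each satisfies $A_n\subseteq E_n$, and $\bigcup_n A_n=\bigcup_n E_n=E$, since any point of $E$ lies in $A_m$ for the least $m$ with the point in $E_m$. Moreover $A_n$ is relatively open in the compact space $E_n$, because $F_{n-1}\cap E_n$ is closed in $E_n$; consequently $A_n$ is locally compact, \zd\ and $\sigma$-compact (indeed $A_n=\bigcup_j\{x\in E_n:\dist(x,E_n\setminus A_n)\ge 1/j\}$ is an increasing union of sets compact in $E_n$).

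The heart of the argument is to write each $A_n$ as a countable disjoint union of compact sets. Here I would use that $E_n$ is compact, metrizable and \zd, so every point $x\in A_n$ possesses arbitrarily small neighborhoods that are clopen in $E_n$; choosing one small enough that it lies inside the open set $A_n$, it is then compact (being clopen in the compact $E_n$) and also clopen in $A_n$. Thus every point of $A_n$ has a compact neighborhood that is clopen in $A_n$. Exhausting $A_n$ by an increasing \sq\ of compact sets $K_j$ and covering each $K_j$ by finitely many such compact clopen sets, I obtain compact sets $U_j$, clopen in $A_n$, with $K_j\subseteq U_j\subseteq A_n$; replacing $U_j$ by $U_1\cup\dots\cup U_j$ makes them increasing with $\bigcup_j U_j=A_n$. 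The successive differences $U_j\setminus U_{j-1}$ (and $U_1$) are then compact, pairwise disjoint, cover $A_n$, and each is contained in $E_n$.

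Finally I would re-enumerate the doubly-indexed countable family $\{U_j\setminus U_{j-1}\}_{n,j}$ as a single \sq\ $(\widetilde E_k)_{k\ge 1}$. These sets are compact and pairwise disjoint — within a fixed $n$ by the telescoping construction, and across different $n$ because the $A_n$ are disjoint — their union is $\bigcup_n A_n=E$, and each is contained in $E_n$, which is one of the sets $E_m$; this is precisely the assertion of the lemma.

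I expect the main obstacle to be the decomposition of the non-compact difference $A_n$ into compact clopen pieces. The naive disjointification $E_n\setminus F_{n-1}$ is not compact, and it is \zd ity that rescues the construction by furnishing, at each point, a compact clopen neighborhood; an analogous decomposition is flatly impossible without it (the interval, being connected, admits no partition into nontrivial compact clopen sets). Once the existence of such neighborhoods is secured, the remaining bookkeeping — exhaustion by compacta, rendering the covers increasing, passing to successive differences, and re-indexing — is entirely routine.
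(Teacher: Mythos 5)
Your proposal is correct and follows essentially the same route as the paper: disjointify via $A_n=E_n\setminus(E_1\cup\dots\cup E_{n-1})$, observe that $A_n$ is relatively open in the compact zero-dimensional set $E_n$, decompose it into countably many pairwise disjoint compact sets clopen in $E_n$, and re-enumerate. The only difference is that you spell out (via exhaustion by compacta, finite covers by small clopen neighborhoods, and telescoping) the standard fact that such a relatively open set admits this decomposition, which the paper simply asserts.
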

\begin{proof}
We let $\widetilde E_{0,0}=E_1$. Note that $E_2\setminus E_1$ is relatively open in $E_2$, and thus, can be represented as a countable disjoint union $E_2\setminus E_1=\bigcup_{i\ge1}\widetilde E_{2,i}$, where every set $\widetilde E_{2,i}$ is clopen in $E_2$, and thus, is closed in $E$. We proceed analogously countably many times, namely, for each $n\ge2$, the difference $E_n\setminus(E_1\cup\dots\cup E_{n-1})$ is relatively open in $E_n$, and thus, can be represented as a countable disjoint union $\bigcup_{i\ge1}\widetilde E_{n,i}$, where every $\widetilde E_{n,i}$ is clopen in $E_n$, and thus, is closed in $E$. It now suffices to rearrange the double \sq\ $(\widetilde E_{n,i})_{n,i}$ into a single \sq\ $(\widetilde E_n)_n$.
\end{proof}

\begin{lem}\label{Lusky}
Let $K$ be a simplex and let $F$ be a face of $K$ which is $\epsilon$-dense in $K$
(i.e., for each $x\in K$ there is some $y\in F$ with $\dist(x,y)<\epsilon$).
Then there exists an affine continuous retraction (i.e., a map which is identity on its range), 
$\theta:K\to F$ such that $\dist(\theta(x),x)\le\epsilon$, for all $x\in K$.
\end{lem}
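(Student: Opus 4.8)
The plan is to obtain $\theta$ as a continuous affine selection of a suitable set-valued carrier, invoking the affine analogue of Michael's selection theorem for lower semicontinuous carriers into a metrizable simplex (Lazar's affine selection theorem); this applies because the target $F$, being a face of the simplex $K$, is itself a simplex. Concretely, I would define $\Phi:K\to 2^F$ by $\Phi(x)=\{x\}$ when $x\in F$ and $\Phi(x)=\{y\in F:\dist(x,y)\le\epsilon\}$ when $x\notin F$, and then check the hypotheses of the selection theorem. The values are nonempty (on $K\setminus F$ this is exactly the $\epsilon$-density assumption, which furnishes some $w\in F$ with $\dist(x,w)<\epsilon$), compact (intersection of the compact set $F$ with a closed $\epsilon$-ball), and convex (the closed $\epsilon$-ball is convex because the metric is convex with its first coordinate fixed, and $F$ is convex).

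Next I would verify that $\Phi$ is an \emph{affine} carrier, i.e.\ $\Phi(\lambda x+(1-\lambda)x')\supseteq\lambda\Phi(x)+(1-\lambda)\Phi(x')$ for $\lambda\in[0,1]$. The only case that uses the simplex structure is $x\in F$, $x'\notin F$: here the face property of $F$ forces $\lambda x+(1-\lambda)x'\notin F$ for $\lambda\in(0,1)$, so $\Phi$ at the combination is given by the ball formula, and for any $y\in\Phi(x)\subseteq F$ and $y'\in\Phi(x')\subseteq F$ (note $\dist(x,y)\le\epsilon$ in all branches of the definition) convexity of the metric yields
$$\dist(\lambda x+(1-\lambda)x',\,\lambda y+(1-\lambda)y')\le\lambda\dist(x,y)+(1-\lambda)\dist(x',y')\le\epsilon,$$
so $\lambda y+(1-\lambda)y'\in F$ lies in $\Phi$ of the combination. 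The remaining cases (both points in $F$, where one gets equality via the face property, or both outside $F$) are immediate from the same convex-metric estimate.

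The step I expect to be the main obstacle is lower semicontinuity of $\Phi$, precisely at points where the governing distance equals $\epsilon$. At $x_0\notin F$ (where $F$ closed gives a whole neighborhood lying outside $F$, so the ball branch is in force nearby) I would, for a point $y\in\Phi(x_0)$ with $\dist(x_0,y)=\epsilon$, use $\epsilon$-density to choose $w\in F$ with $\dist(x_0,w)<\epsilon$, and slide slightly along the segment $[y,w]\subseteq F$; convexity of the metric makes these points have distance \emph{strictly} below $\epsilon$ from $x_0$ while converging to $y$, and strict inequality persists under small perturbations of $x_0$, witnessing lower semicontinuity. At $x_0\in F$ it is straightforward: for $x$ near $x_0$ either $x\in F$ and $\Phi(x)=\{x\}\to x_0$, or $x\notin F$ and $\dist(x,x_0)<\epsilon$ puts $x_0\in\Phi(x)$. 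Once $\Phi$ is seen to be a lower semicontinuous affine carrier with nonempty compact convex values, the affine selection theorem produces a continuous affine $\theta:K\to F$ with $\theta(x)\in\Phi(x)$ for all $x$; this single inclusion delivers both the retraction property $\theta|_F=\id$ (since $\Phi\equiv\{x\}$ on $F$) and the bound $\dist(\theta(x),x)\le\epsilon$, which completes the proof.
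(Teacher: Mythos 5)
Your proposal is correct and follows essentially the same route as the paper: both build a nonempty, compact, convex-valued, affine, lower semicontinuous carrier from the $\epsilon$-balls intersected with $F$ (singleton on $F$ itself) and then invoke the Lazar--Michael affine selection theorem. The only difference is cosmetic: the paper uses the closure of the \emph{open}-ball carrier, which makes lower semicontinuity a one-line computation, whereas your closed-ball carrier requires the (correct) sliding argument at points where the distance equals $\epsilon$.
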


\begin{proof}
For each $x\in K\setminus F$ let $\Theta(x)=\{y\in F: \dist(x,y)<\epsilon\}$ and $\bar\Theta(x)=\overline{\Theta(x)}$. For $x\in F$ define $\Theta(x)=\bar\Theta(x)=\{x\}$. By $\epsilon$-density and convexity of $F$, and convexity of the metric, the multifunction $\Theta$ has nonempty convex images and $\bar\Theta$ has nonempty compact and convex images. Moreover, $\bar\Theta$ is lower hemicontinuous, i.e., for every relatively open set $U\subset F$ the ``preimage'' $\{x\in K:\bar\Theta(x)\cap U\neq\emptyset\}$ is open. Indeed, we have
$$
\{x\in K:\bar\Theta(x)\cap U \neq\emptyset\}= \{x:\Theta(x)\cap U \neq\emptyset\}=\{x:\dist(x,U)<\epsilon\},
$$
which is open (regardless of the \tl\ properties of $U$, which in this case is neither open nor closed in $K$). Also note that convexity of the metric and the fact that $F$ is a face (hence no point in $F$ is a convex combination involving points from outside $F$) imply that $\Theta$ is \emph{convex}, i.e., satisfies,
for any $x,y\in K$ and $\alpha\in(0,1)$ the inclusion
$$
\alpha\Theta(x)+(1-\alpha)\Theta(y) \subset \Theta(\alpha x +(1-\alpha)y).
$$
Clearly, $\bar\Theta$ is convex as well.

At this point we can apply the well-known Lazar-Michael selection theorem (see \cite{La60, FLP01})
which asserts that $\bar\Theta$ admits an affine and continuous selector (i.e., a function $\theta:K\to F$ such that $\theta(x)\in\bar\Theta(x)$ for all $x\in K$). Clearly, $\theta$ satisfies the assertion of the lemma.
\end{proof}

\begin{proof}[Proof of Theorem \ref{sigma}]
As in the proof of Theorem \ref{main}, most of the effort will be devoted to \emph{embedding} $\Phi$ on $K$ in a \zd\ system (which is the same as embedding it in the universal simplex). Surjectivity will be taken care of in the last paragraph of the proof. 
\medskip

Using the fact that the universal simplex is Poulsen, and that the Poulsen simplex is universal in the sense that every simplex is affinely homeomorphic to a face of the Poulsen simplex (see \cite{LOS78}), we can assume that $K$ is a face of the universal simplex. From now on, for any face $F$ of the universal simplex, we will abbreviate ``the restriction to $F$ of the natural assignment coming from the universal simplex'' shortly as the ``restricted natural assignment on $F$''. So, on $K$ we have two assignments: $\Phi$ and the restricted natural assignment (which, at this stage, is beyond our control). 

Recall that $\ex K = \bigcup_{n\ge 1}E_n$, where each $E_n$ is zero-dimensional and compact and the restriction of $\Phi$ to the simplex $K_n$ spanned by $E_n$ is embeddable in a \ds. Lemma \ref{assume} provides a partition of $\ex K$ into disjoint compact sets $\widetilde E_n$, each contained in some $E_m$.  Then $\Phi$ restricted to each $\widetilde E_n$ remains embeddable in a \ds. In other words, we can assume from the start that the sets $E_n$ are disjoint. We choose a summable \sq\ $(\epsilon_k)_{k\ge 1}$ of positive numbers and continue by induction, as follows:
\bigskip

STEP 1. \ \ For a sufficiently large $n_1$, the (disjoint) union $E_1\cup E_2\cup\cdots\cup E_{n_1}$ is $\epsilon_1$-dense in $\ex K$. Then the simplex $L_1$ spanned by this union is $\epsilon_1$-dense in $K$. 
Each of the sets $E_n$ ($1\le n\le n_1$) can be partitioned into a finite union of disjoint clopen sets of diameters smaller than $\epsilon_1$. We denote the finitely many sets obtained in this manner from all the sets $E_1,\dots,E_{n_1}$ by $E_{1,1},\dots,E_{1,m_1}$, and we let $K_{1,1},\dots,K_{1,m_1}$ be the Bauer simplices spanned by $E_{1,1},\dots, E_{1,m_1}$, respectively. In a simplex, faces with disjoint sets of extreme points are disjoint. This implies that the faces $K_{1,i}$ are disjoint. All of these simplices have diameters smaller than $\epsilon_1$ and their union spans $L_1$. The restriction of $\Phi$ to each $K_{1,i}$ $(i=1,\dots,m_1)$ is embeddable in a \ds, and by Theorem~\ref{main} it is realizable in a \zd\ system. 

Now we apply the affine continuous retraction of Lemma \ref{Lusky}, which we denote by $\Ret_1:K\to L_1$, and which moves points by less than $\epsilon_1$. 

Next, using Lemma \ref{dense}, for each $i=1,\dots,m_1$, in the $\epsilon_1$-neighborhood of the set $K_{1,i}$ (within the universal simplex) we find a face $K'_{1,i}$ affinely homeomorphic to $K_{1,i}$ and such that the restricted natural assignment on $K'_{1,i}$ is equivalent to $\Phi|_{K_{1,i}}$. Moreover, since every proper face of any simplex is nowhere dense in that simplex\footnote{Indeed, let $x\in F$, where $F$ is a proper face in a simplex $K$ and let $e\in\ex K\setminus\ex F$. Then the \sq\ $\frac1n e+\frac{n-1}nx$ tends to $x$ from outside $F$.}, we can easily arrange that the faces $K'_{1,i}$ are pairwise disjoint and disjoint from $K$. For each $i$ we choose an affine homeomorphism from $K_{1,i}$ onto $K'_{1,i}$ which establishes the equivalence between $\Phi|_{K_{1,i}}$ and the restricted natural assignment on $K'_{1,i}$. The union of these maps is then prolonged harmonically to an affine homeomorphism $\pi_1$ from $L_1$ onto the simplex $L'_1$ spanned by the union $K'_{1,1}\cup\cdots\cup K'_{1,m_1}$. Notice that $\pi_1$ does not move points more than $2\epsilon_1$ (this is obvious for points in each $K_{1,i}$, then the property passes to the harmonic prolongation by convexity of the metric). The composition $\phi_1=\pi_1\circ\Ret_1$ maps $K$ onto $L'_1$ moving points by less than $3\epsilon_1$, and on $L_1$ it coincides with $\pi_1$ and establishes an equivalence between $\Phi|_{L_1}$ and the restricted natural assignment on $L'_1$.

\medskip
STEP $k\!+\!1$. \ \ Suppose that for some $k\ge 1$ we have constructed an affine continuous map $\phi_k:K\to L'_k$ onto some face of the universal simplex disjoint from $K$, so that on the face $L_k$ of $K$ spanned by the union $E_1\cup\dots\cup E_{n_k}$ it coincides with an affine homeomorphism $\pi_k:L_k\to L'_k$ which establishes an equivalence between $\Phi|_{L_k}$ and the restricted natural assignment on $L'_k$. We start by applying the convex combination 
$$
\phi'_k=(1-\alpha_k)\phi_k+\alpha_k\id,
$$ 
where $\alpha_k$ is positive, but small enough so that at each point of $K$, $\phi'_k$ differs from $\phi_k$ (in the distance) by less than $\epsilon_{k+1}$. Contrary to the noninjective map $\phi_k$, $\phi'_k$ is easily seen (using disjointness of $K$ and $L'_k$) to be an affine homeomorphism between $K$ and its image (however, the good assignment on the image of $L_k$ is now lost, moreover, the image, although remains a simplex, is no longer a face of the universal simplex). Let $L_{k+1}$ be a face of $K$ spanned by a disjoint union of $L_k$ and sufficiently many sets $E_{n_k+1},\dots,E_{n_{k+1}}$, so that $\phi'_k(L_{k+1})$ is $\epsilon_{k+1}$-dense in $\phi'_k(K)$. As before, we partition the sets $E_n$ ($n_k<n\le n_{k+1}$) into smaller clopen sets denoted by $E_{k+1,1},\dots,E_{k+1,m_{k+1}}$, such that, for each $i=1,\dots,m_{k+1}$, $\phi'_k(E_{k+1,i})$ has diameter smaller than $\epsilon_{k+1}$. Clearly, $\Phi$ restricted to each $E_{k+1,i}$ is embeddable in a \tl\ system and thus, by Theorem \ref{main}, realizable in a \zd\ system. 

We can now apply the retraction 
$$
\Ret_{k+1}:\phi_k'(K)\to\phi_k'(L_{k+1}),
$$
which moves points by less than $\epsilon_{k+1}$ and leaves the points of $\phi'_k(L_{k+1})$ invariant. 

\begin{figure}[ht]
\includegraphics[width=14cm]{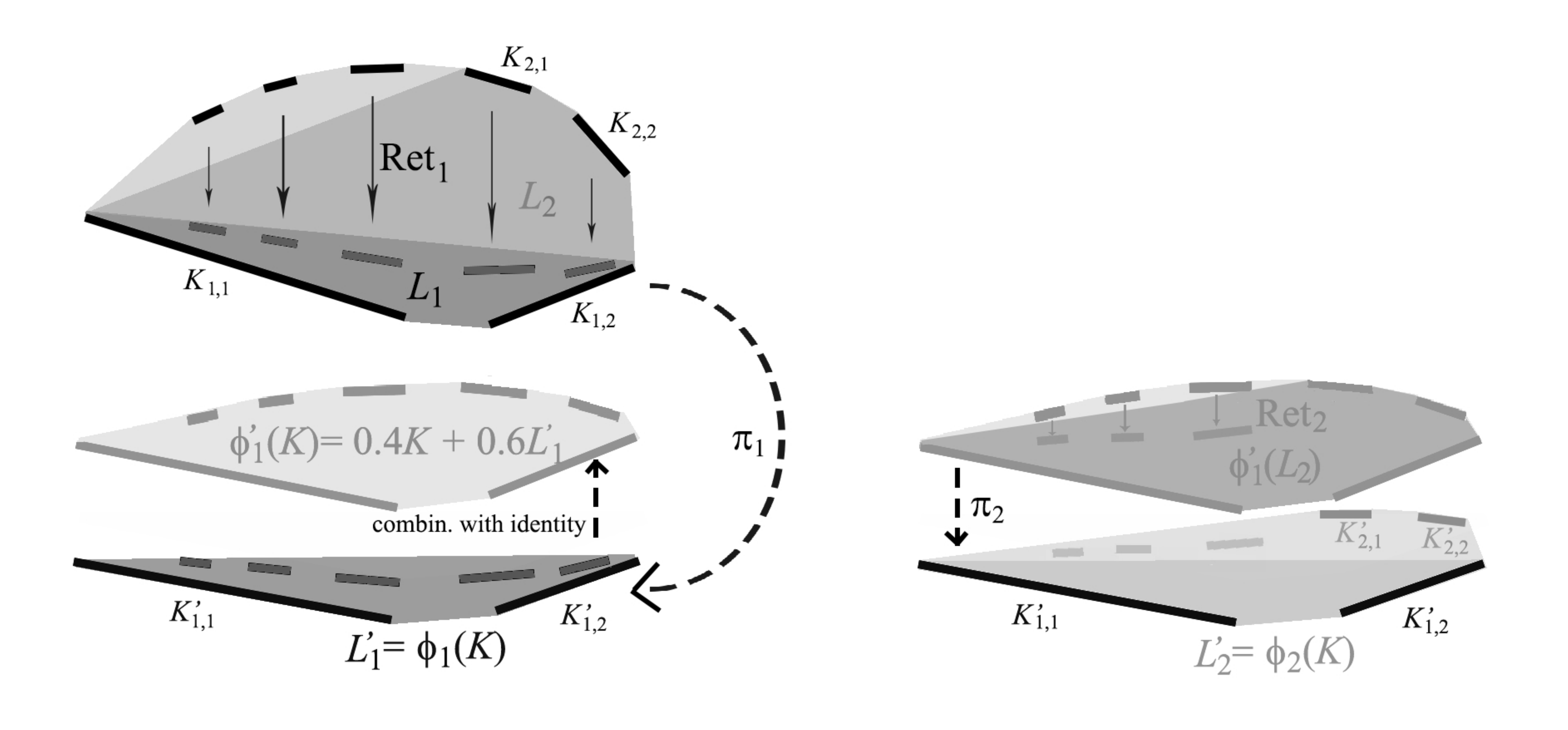}
\end{figure}

In the $\epsilon_{k+1}$-neighborhood of each set $\phi'_k(K_{k+1,i})$ we find a face (of the universal simplex) $K'_{k+1,i}$ affinely homeomorphic to $K_{k+1,i}$, on which the restricted natural assignment is equivalent to $\Phi|_{K_{k+1,i}}$. Moreover, we can arrange that these faces are disjoint from each other, from $L'_k$ and from $K$. We let $L'_{k+1}$ be the simplex spanned by the union $L'_k\cup K'_{k+1,1}\cup\cdots\cup K'_{k+1,m_{k+1}}$. This simplex is affinely homeomorphic to $L_{k+1}$ (and thus also to $\phi'_k(L_{k+1})$) and we can select an affine homeomorphism $\pi_{k+1}:\phi'_k(L_{k+1})\to L'_{k+1}$ so that:
\begin{itemize}
	\item on $\phi'_k(L_k)$ it acts by the formula $\pi_{k+1}(\phi'_k(x)) = \phi_k(x)$.
	Then it sends $\phi'_k(L_k)$ onto $L'_k$ ``forgetting'' the effect of the convex combination with the 
	identity and it establishes an equivalence between $(\phi'_k\circ\Phi)|_{\phi'_k(L_k)}$ (the transported 
	by $\phi'_k$ assignment $\Phi|_{L_k}$) and the restricted natural assignment on $L'_k$,
	\item on each set $\phi'_k(K_{k+1,i})$ it establishes an equivalence between 
	$(\phi'_k\circ\Phi)|_{\phi'_k(K_{k+1,i})}$ (the transported by $\phi'_k$ assignment $\Phi|_{K_{k+1,i}}$) 
	and the restricted natural assignment on $K'_{k+1,i}$; notice that on each set $\phi'_k(K_{k+1,i})$,   
	$\pi_{k+1}$ moves points by less than $2\epsilon_{k+1}$.
\end{itemize}
Then we define $\phi_{k+1}$ as the composition $\pi_{k+1}\circ\Ret_{k+1}\circ\phi'_k$ (see the figure above). This map $\phi_{k+1}$ has the following properties:
\begin{enumerate}
	\item it is an affine continuous map from $K$ onto $L'_{k+1}$ which on $L_k$ coincides with $\phi_k$,
	\item on $L_{k+1}$ it establishes an equivalence between $\Phi|_{L_{k+1}}$ and 
	the restricted natural assignment on $L'_{k+1}$,
	\item it differs from $\phi'_k$ by less than $3\epsilon_{k+1}$, and thus from $\phi_k$ by less than 
	$4\epsilon_{k+1}$.
\end{enumerate}

\medskip
When the induction is complete, we define $\phi$ as the pointwise limit of the maps $\phi_k$. By (3) and summability of the \sq\ $(\epsilon_k)_{k\ge 1}$, the limit exists and is uniform, so $\phi$ is continuous (and it is clearly affine). On each set $L_k$ the limit is achieved in step $k$ (further mappings coincide with $\phi_k$), hence $\phi(L_k)=L'_k$, and $\phi$ establishes an equivalence between $\Phi|_{L_k}$ and the restricted natural assignment on $L'_k$. In particular, $\phi$ is injective on $L_k$.
Thus $\phi$ is injective on $\ex K=\bigcup_{k\ge 1}\ex L_k$, and it sends extreme points of $K$ to points extreme in the universal simplex, and thus extreme in the image $K'$. This implies that $\phi$ is injective on the entire simplex $K$ and hence is an affine homeomorphism between $K$ and $K'$. Also, it establishes an equivalence between $\Phi|_{\ex K}$ and the restricted natural assignment on $\ex K'$, which, by harmonicity of both assignments, implies that $\phi$ establishes an equivalence between $\Phi$ and the restricted natural assignment on $K'$. This ends the proof of embeddability of $\Phi$ in the universal simplex, i.e., in a \zd\ system.

\medskip
To complete the proof we apply once again \cite[Theorem 4.1]{DFace}, this time to the universal simplex and its face $K'$. Each ergodic measure $\mu'\in \ex K'$ is isomorphic to some ergodic measure in the assignment $\Phi$ and hence is aperiodic. Thus, according to the above cited theorem, the restricted natural assignment on $K'$ can be realized in a \zd\ system. Since we have just constructed an equivalence between $\Phi$ and the restricted natural assignment on $K'$, the assignment $\Phi$ can be realized in a \zd\ system as well.
\end{proof}

\section{Final remarks}
{\bf 1.} Both main Theorems \ref{main} and \ref{sigma} hold for noninvertible \tl\ \ds s \xt\ (where $T:X\to X$ is a continuous map, not necessarily a homeomorphism). One has to extend the notion of an assignment admitting as values measure-preserving endomorphisms (rather than automorphisms). The proof of Theorem \ref{main} does not depend on invertibility (also \cite[Theorem 4.1]{DFace}, used in the last stage, holds for endomorphisms). The Krieger's Marker Lemma (Theorem~\ref{krieger}) also remains valid for noninverible systems (see e.g. \cite[Lemma 1]{D06}). The only delicate place is in the proof of Lemma \ref{dense}, where we define the additional rows of $\widetilde x$ by placing there certain rectangles \emph{to the right} of the $N_0$-markers. In the noninvertible case, all arrays have only nonnegative column numbers, and the first marker usually occurs at some positive position. Then we have no indication as to what should be placed in the additional rows \emph{to the left} of the first marker. To cope with this problem we first fill the new rows only to the right of the first $N_0$-marker and then we shift the contents of these rows to the left by $2N_0-1$ units. In this manner the unfilled left section will certainly disappear. 
\medskip

{\bf 2.} It seems that the proof of Theorem \ref{sigma} could be used to show the following: 
\begin{enumerate}
\item[(A)] Suppose that a simplex $K$ equals the closed convex hull of a countable family of its faces: $K=\overline{\mathsf{conv}}\bigl(\bigcup_{n\ge 1} K_n\bigr)$, where the faces $K_n$ are disjoint, and their diameters tend to zero. Let $\Phi$ be an aperiodic assignment on $K$ such that $\Phi|_{K_n}$ is embeddable in a \zd\ system. Then $\Phi$ can be realized in a \zd\ system.
\end{enumerate}
Alas, there is one place where the proof does not pass: in step $k+1$ we need to be able to partition the sets $E_n=\ex K_n$ with $n_k<n\le n_{k+1}$ into finitely many separated pieces, each spanning a face of $K$, and whose images by $\phi'_k$ have small diameters. This may be impossible if the sets $E_n$ are not compact zero-dimensional. We leave (A) as a conjecture.
\medskip

{\bf 3.} Initially the inductive part of the proof of Theorem \ref{sigma} was supposed to be based on the proof of \cite[Corollary 5.2]{DFace}. While working on the details we have discovered a serious gap in that proof. This paper fixes the gap: a correct proof of \cite[Corollary 5.2]{DFace} is obtained by applying  Theorem \ref{sigma} in the case where all sets $E_n$ (and thus $K_n$) are singletons. 
\medskip

{\bf 4.} Anticipating obvious inquires, we confess that we do not have any example of an aperiodic system with a \zd\ sigma-compact (or Bauer) simplex of \im s, for which the realizability in a \zd\ system would not follow by more direct reasons (for example from the SBP). So, one might criticize our results for lack of evident applicability. True. However, we treat this work as an opportunity to develop and practice new tools and methods in handling the difficult problem of realizing assignments in \zd\ systems. As mentioned in the introduction, the ultimate target is to prove (or disprove) such realizability for the natural assignments in all aperiodic \tl\ \ds s.

\medskip

\end{document}